\newcommand{\bbZ}{\mathbb{Z}}
\renewcommand{\phi}{\varphi}
\newtheorem{theorem}{Theorem}[section]
\newtheorem{proposition}[theorem]{Proposition}
\newtheorem*{lemma*}{Lemma}
\theoremstyle{definition}
\newtheorem*{example}{Example}
\numberwithin{equation}{subsection}
\newcommand{\set}[1]{\left\lbrace #1 \right\rbrace}
\DeclareMathOperator{\ord}{ord}
\newcommand{\Addresses}{{%
  \bigskip
  \footnotesize

  \noindent S.~Dembner, \textsc{Department of Mathematics, University of Chicago, Chicago, IL 60637}\par\nopagebreak
  \textit{Email address}: \texttt{sdembner@uchicago.edu}

  \medskip

  \noindent V.~Jain, \textsc{Department of Mathematics, Massachusetts Institute of Technology, Cambridge, MA 02139}\par\nopagebreak
  \textit{Email address}: \texttt{vanshika@mit.edu}

}}
\begin{document}

\pagestyle{plain}

\author{Spencer Dembner\thanks{Both authors were supported by the NSF (DMS-2002265), the NSA (H98230-20-1-0012), the Templeton World Charity Foundation, and the Thomas Jefferson Fund at the University of Virginia.},\, Vanshika Jain\footnotemark[1]}
\title{A Note on Congruences for Weakly Holomorphic Modular Forms}
\date{\today}

\maketitle

\begin{abstract}

Let $O_L$ be the ring of integers of a number field $L$. Write $q = e^{2 \pi i z}$, and suppose that
$$ f(z) = \sum_{n \gg - \infty} a_f(n) q^n \in M_{k}^{!}(\operatorname{SL}_2(\bbZ)) \cap O_L[[q]] $$
is a weakly holomorphic modular form of even weight $k \leq 2$. We answer a question of Ono by showing that if $p \geq 5$ is prime and $ 2-k = r(p-1) + 2 p^t$ for some $r \geq 0$ and $t > 0$, then $a_f(p^t) \equiv 0 \pmod p$. For $p = 2,3,$ we show the same result, under the condition that $2 - k - 2 p^t$ is even and at least $4$. This represents the ``missing case" of Theorem 2.5 from \cite{Seokho2016note}.
\end{abstract}

\section{Introduction}

For any even $k \geq 4$, let $M_k$ denote the space of holomorphic modular forms of level $1$ and weight $k$. A meromorphic modular form $f$ is called \emph{weakly holomorphic} if it is holomorphic on the upper half plane but may have a pole at infinity. For any even $k$, let $M_k^{!}$ denote the space of weakly holomorphic modular forms of level $1$ and weight $k$. For any $f \in M_k^!$, we write its Fourier expansion in terms of $q = e^{2 \pi i z}$ as

$$ f(z) = \sum_{n \gg - \infty} a_f(n) q^n.$$

Suppose that $f \in M_k$ is a normalized cuspidal eigenform. We say that the prime $p$ is \emph{non-ordinary} for $f$ if $p$ divides $a_f(p)$, and otherwise we say $p$ is \emph{ordinary}. Non-ordinary primes are generally expected to be rare, but little is definitively known. Elkies \cite{Elkies1987existence} showed that for any weight $2$ newform $f$, there are infinitely many primes $p$ which are non-ordinary. However, aside from forms of weight $2$, it is not known for any normalized eigenform without complex multiplication either that there are infinitely many ordinary primes, or that there are infinitely many non-ordinary primes.

In \cite{Choie2005linear}, Choie, Kohnen, and Ono show that for certain weights $k$ and primes $p$, $p$ is non-ordinary for all normalized eigenforms of level $1$ and weight $k$. Specifically, they show that if $\delta(k) \in \set{4,6,8,10,14}$, if $k \equiv \delta(k) \pmod{12}$, and if $p$ is a prime for which $k \equiv \delta(k) \pmod{p-1}$, then for all primes $\mathfrak{p} \subset O_L$ above $p$, we have
$$ a_f(p) \equiv 0 \pmod{\mathfrak{p}}. $$
Applying this result appropriately, one recovers many known cases of non-ordinary primes. For instance, one can show that $2,3,5,$ and $7$ are non-ordinary for $\Delta$, capturing all but one non-ordinary prime up to $10^6$. 

Jin, Ma, and Ono generalized this result in \cite{Seokho2016note}. This allowed them to show that given any finite set of primes $S$, there are infinitely many level $1$ normalized Hecke eigenforms $f$ such that every $p \in S$ is non-ordinary for $f$.

Beyond Hecke eigenforms, the authors proved a congruence for the coefficients of weakly holomorphic modular forms. Let $p \geq 5$ be prime. Suppose that $f = \sum_{ n \gg - \infty} a_f(n) q^n \in M_k^{!} \cap O_L[[q]]$, where $k$ is even, that we have 
$$ 2-k = r(p-1) + s p^t $$
for some $s \neq 2$, and that $\ord_{\infty}(f) > - p^u$, where $u \leq t$. Then they show that for any integer $v$ with $u \leq v \leq t$, we have 
$$ a_f(p^v) \equiv a_f(0) \equiv 0 \pmod p. $$

Ono \cite{Onoprivatecommunication} asked whether a similar result holds for $s = 2$ as well. In this note, we answer this question by proving the following.

\begin{theorem}\label{thm: Main theorem}
Suppose that $f = \sum_{n \gg - \infty} a_f(n) q^n \in M_k^{!} \cap O_L[[q]]$, where $k \in 2 \bbZ$ and $O_L$ is the ring of algebraic integers of a number field $L$. Let $p$ be a prime, and suppose that $\ord_{\infty}(f) > - p^t$. Then the following are true.

\begin{enumerate}
    \item Suppose that $p \geq 5$, and that $k \leq 2$, $r \in \bbZ_{\geq 0}$ and $t \in \bbZ_{>0}$ are integers for which
$$ 2-k = r(p-1) + 2 p^t. $$
Then we have
$$ a_f(p^t) \equiv 0 \pmod p. $$
\item Suppose that $p = 2,3,$ and that $k + 2 p^t + m = 2$, where $m \geq 4$ is even. Then we have
$$ a_f(p^t) \equiv 0 \pmod p. $$
\end{enumerate}

\end{theorem}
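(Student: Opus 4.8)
The plan is to exploit the duality between weights $k$ and $2-k$. For any $g \in M_{2-k}^!$ with $p$-integral coefficients, the product $fg$ lies in $M_2^!$, and every weakly holomorphic modular form of weight $2$ on $\SL_2(\bbZ)$ has vanishing constant term. (Equivalently $M_2^! = \theta(\bbC[j])$, where $\theta = q\frac{d}{dq}$ and a $\theta$-derivative has no constant term; or one argues by the residue theorem on $X(1)$, whose only pole is at the cusp.) Writing out $a_{fg}(0)=0$ then yields the exact identity $\sum_n a_f(n)a_g(-n) = 0$, and the whole strategy is to choose $g$ so that, modulo $p$, this identity collapses to a single term proportional to $a_f(p^t)$.

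First I would record exactly which $g$ suffices. Since $\ord_{\infty}(f) > -p^t$, we have $a_f(n) = 0$ for $n \le -p^t$, so in the pairing the coefficients $a_g(j)$ with $j \ge p^t$ never meet a nonzero $a_f$. Hence it is enough to produce $g \in M_{2-k}^! \cap \bbZ_{(p)}[[q]]$ of pole order exactly $p^t$ with leading coefficient a $p$-unit, all of whose coefficients of $q^{j}$ for $-(p^t-1) \le j \le p^t-1$ are divisible by $p$; in shorthand, $g \equiv q^{-p^t} + O(q^{p^t}) \pmod p$. The key construction uses $h = -\theta j = q^{-1} + \sum_{n \ge 1} b_n q^n \in M_2^!$, which has integer coefficients, pole order $1$, and—crucially—vanishing constant term (again the weight-$2$ fact). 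By the Frobenius congruence $\left(\sum c_n q^n\right)^{p^t} \equiv \sum c_n^{p^t} q^{p^t n} \pmod p$, its $p^t$-th power satisfies $h^{p^t} \equiv q^{-p^t} + \sum_{n \ge 1} b_n^{p^t} q^{p^t n} \pmod p$, whose tail lives in exponents $\ge p^t$. This is the crux: I do not need $g \equiv q^{-p^t}$ exactly (which is false), only that raising to the $p^t$-th power spreads the tail past $q^{p^t-1}$, while the vanishing constant term of $h$ prevents a stray $q^0$ term from surviving.

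It remains to fix the weight. For $p \ge 5$ one has $E_{p-1} \equiv 1 \pmod p$, so $g := h^{p^t} E_{p-1}^{r}$ has weight $2p^t + r(p-1) = 2-k$ and the same reduction mod $p$ as $h^{p^t}$; as $r \ge 0$ this is a genuine element of $M_{2-k}^!$. For $p = 2,3$, where $E_{p-1}$ is unavailable, I would instead use $E_4 \equiv E_6 \equiv 1 \pmod p$ and write the even weight $m = 2 - k - 2p^t \ge 4$ as $4a+6b$ with $a,b \ge 0$, taking $g := h^{p^t} E_4^{a} E_6^{b}$; this is precisely why part (2) needs $m$ even and at least $4$. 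Substituting such a $g$ into $\sum_n a_f(n)a_g(-n) = 0$ and reducing mod $p$, every surviving product forces $n = p^t$ (the indices $j \ge p^t$ pair with $a_f = 0$, while the middle coefficients of $g$ vanish mod $p$), leaving $a_f(p^t)\cdot a_g(-p^t) \equiv a_f(p^t) \equiv 0 \pmod p$.

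I expect the only genuine obstacle to be the construction of $g$ with the correct mod-$p$ expansion; once the idea of taking a $p^t$-th power of the weight-$2$ form $-\theta j$ is in hand, the vanishing constant term, the weight bookkeeping, and the pole-order hypothesis combine to make the remainder routine. The care points to flag are the justification that $a_{fg}(0)=0$, the integrality/Frobenius step over $O_L$ (so that the congruences descend to $\pmod{p}$ coefficientwise), and the two-case weight adjustment distinguishing $p \ge 5$ from $p = 2,3$.
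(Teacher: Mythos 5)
Your proposal is correct and follows essentially the same route as the paper: multiply $f$ by the $p^t$-th power of $\Theta(j)$ (adjusted by $E_{p-1}^r$ or $E_4^{a}E_6^{b}$ to land in weight $2$), invoke the vanishing of the constant term of weight-$2$ weakly holomorphic forms via $M_2^! = \Theta(\bbC[j])$, and use the Frobenius congruence together with $\ord_\infty(f) > -p^t$ to isolate $a_f(p^t)$. The only cosmetic difference is that you phrase the conclusion as a bilinear pairing $\sum_n a_f(n)a_g(-n)=0$ rather than directly as $a_h(0)=0$ for the product $h$, which is the same computation.
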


The proof is given in Section \ref{Sec: The Proof}. The key idea is that the forms in question are related via congruences to weight $2$ forms which arise as derivatives of modular functions.

\begin{example}
In the case $s = 2$, it is not true in general that $a_f(0) \equiv 0 \pmod p$. For instance, take $p = 5, t = 1$, and $k = -8$. The form
$$f = \frac{E_4}{\Delta} = q^{-1} + 264 + \cdots +  126745880q^5 + \cdots$$
is weakly holomorphic of weight $k$. We have $a_f(5) \equiv 0 \pmod 5$, but $a_f(0) \not \equiv 0 \pmod 5$.
\end{example}

\section{The Proof}\label{Sec: The Proof}

In Subsection \ref{subsec: preliminaries}, we recall basic facts about the Theta operator and about congruences of modular forms. In Subsection \ref{subsec: proof of main thm}, we prove Theorem \ref{thm: Main theorem}.

\subsection{Preliminaries}\label{subsec: preliminaries}

Let $f$ in $M_k^{!}$ be a weakly holomorphic modular form. Ramanujan's Theta operator is defined by 
$$ \Theta(f) = q \frac{d}{dq}(f) = \frac{1}{2 \pi i} \cdot \frac{d f}{dz}. $$

\begin{proposition}\label{prop: vanishing of constant term}
If $f = \sum_{n \gg - \infty} a_f(n) q^n \in M_2^{!}$, then $f = \Theta(P(j))$ for some polynomial in $j$. In particular, we have $a_f(0) = 0$.
\end{proposition}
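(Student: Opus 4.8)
The plan is to prove the more precise statement that $\Theta$ carries polynomials in $j$ onto all of $M_2^!$, and then to read off $a_f(0)=0$ for free from the structure of $\Theta$. Two standard facts drive everything. First, since $\Theta\big(q^n\big)=n\,q^n$, applying $\Theta$ to any $q$-series $g=\sum_{n\gg-\infty} b_n q^n$ kills the $q^0$-term: every form in the image of $\Theta$ has vanishing constant term. Hence once we know $f=\Theta(P(j))$, the assertion $a_f(0)=0$ is automatic. Second, for a weakly holomorphic form $g$ of weight $k$, the Serre-type combination $\Theta(g)-\tfrac{k}{12}E_2 g$ is again weakly holomorphic of weight $k+2$; specializing to $k=0$ shows that $\Theta$ maps $M_0^!$ into $M_2^!$. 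Since $j\in M_0^!$, this already gives the inclusion $\Theta(P(j))\in M_2^!$ for every polynomial $P$.

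For the reverse direction I would exhibit a family inside the image that is triangular with respect to pole order. Set $g_m:=\Theta(j^m)$ for $m\geq 1$. Because $j=q^{-1}+O(1)$, we have $j^m=q^{-m}+O(q^{-m+1})$ and therefore $g_m=-m\,q^{-m}+O(q^{-m+1})$; as $m\neq 0$ the leading coefficient is nonzero, so $g_m$ has a pole of exact order $m$ at infinity. Now take a nonzero $f\in M_2^!$. It must genuinely have a pole at infinity, for otherwise it would be a holomorphic weight-$2$ level-$1$ form, and $M_2=\set{0}$. Writing $N:=-\ord_\infty(f)\geq 1$, I would successively subtract suitable multiples $c_N g_N, c_{N-1}g_{N-1},\dots,c_1 g_1$ to cancel the entire principal part of $f$, leaving $h:=f-\sum_{m=1}^{N}c_m g_m\in M_2^!$ with $\ord_\infty(h)\geq 0$. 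But such an $h$ is holomorphic on the upper half plane and at the cusp, hence lies in $M_2=\set{0}$, so $h=0$. This yields $f=\sum_{m=1}^{N}c_m\,\Theta(j^m)=\Theta\big(\sum_{m=1}^{N}c_m j^m\big)=\Theta(P(j))$, which is the claim. The same identity $h=0$ pins down the constant term: each $g_m$ has zero constant term, so $a_f(0)$ equals the constant term of $h$, namely $0$.

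The only genuinely nontrivial ingredient is the weight-raising fact, namely that $\Theta$ preserves weakly holomorphic modularity up to the quasimodular correction $\tfrac{k}{12}E_2 g$ and that this correction vanishes precisely in weight $0$; the remaining obstacle is simply to note that $M_2=\set{0}$, after which the argument is elementary triangular bookkeeping on pole orders.
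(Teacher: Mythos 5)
Your proof is correct, but it takes a genuinely different route from the paper's. The paper starts from the structure theorem that every $f\in M_2^!$ equals $P(j)\,E_{14}\,\Delta^{-1}$ for some polynomial $P$, combines this with the explicit identity $-\Theta(j)=E_{14}/\Delta$ and the chain rule $j^w\,\frac{d}{dz}j=\frac{1}{w+1}\frac{d}{dz}j^{w+1}$, and writes down the antiderivative directly: $f=\Theta(Q(j))$ where $Q'=-P$. You instead establish that each $\Theta(j^m)$ lies in $M_2^!$ via the quasimodular transformation of $E_2$ (whose correction term $\tfrac{k}{12}E_2 g$ vanishes in weight $0$), observe that $\Theta(j^m)=-m\,q^{-m}+O(q^{-m+1})$ is triangular in pole order, and peel off the principal part of $f$, invoking $M_2=\set{0}$ to conclude the remainder vanishes. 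The paper's argument is shorter if one grants the $P(j)E_{14}\Delta^{-1}$ description of $M_2^!$ as known; your argument does not need that description (indeed it reproves it, since $E_{14}/\Delta=-\Theta(j)$), relying only on the dimension fact $M_2=\set{0}$ and the $E_2$ weight-raising identity, and the triangular bookkeeping generalizes readily to surjectivity statements for other $\Theta$-images. Both proofs extract $a_f(0)=0$ the same way in the end: the image of $\Theta$ has no constant term.
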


\begin{proof}
For completeness, we present the proof, which is given in \cite{Seokho2016note}. Every weakly holomorphic modular form $f$ of weight $2$ is of the form $P(j(z)) E^{14}(z) \Delta(z)^{-1}$, for some polynomial $P(x)$. We have the following two key identities:
$$ - \frac{1}{2\pi i} \frac{d}{dz}j = -\Theta(j) = \frac{E_{14}}{\Delta}, $$
$$ j^w \frac{d}{dz} j = \frac{1}{w + 1} \frac{d}{dz} j^{w+1}.$$
If $Q$ is a polynomial such that $Q'(x) = -P(x)$, it follows that we have
$$ \Theta( Q(j)) =  \frac{-1}{2 \pi i} P(j) \frac{d}{dz}j = P(j) \frac{E_{14}}{\Delta} = f.  $$
The derivative with respect to $z$ of a $q$-series has vanishing constant term, so we have $a_f(0) = 0$. \qedhere

\end{proof}

We will also need the following well-known congruences for Eisenstein series.

\begin{proposition}\label{prop: congruence for eisenstein series}
If $p \geq 5$ is prime, then as a $q$-series, $E_{p-1}(z) \equiv 1 \pmod p$. Additionally, for any $k \geq 2$, we have $E_k(z) \equiv 1 \pmod{24}$.
\end{proposition}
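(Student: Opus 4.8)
The plan is to read both congruences off the standard Fourier expansion of the normalized Eisenstein series and then to control everything with a single tool, the von Staudt--Clausen theorem. Recall that for even $k \geq 2$ one has
$$ E_k(z) = 1 - \frac{2k}{B_k}\sum_{n=1}^{\infty}\sigma_{k-1}(n)\,q^n, $$
where $B_k$ is the $k$-th Bernoulli number and $\sigma_{k-1}(n) = \sum_{d\mid n} d^{k-1} \in \bbZ$. Since the divisor sums are integers, the entire content of both assertions is a statement about the $p$-adic valuation of the rational constant $\frac{2k}{B_k}$. The only input I need is von Staudt--Clausen, which says that $B_k + \sum_{(p-1)\mid k}\frac{1}{p}$ is an integer; in particular, whenever $(p-1)\mid k$ one has $v_p(B_k) = -1$ (writing $v_p$ for the $p$-adic valuation), and otherwise $B_k$ is $p$-integral.

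For the first congruence I would set $k = p-1$ with $p \geq 5$. Then $(p-1)\mid (p-1)$, so $v_p(B_{p-1}) = -1$ and hence $v_p(1/B_{p-1}) = 1$. Since $p \geq 5$ neither $2$ nor $p-1$ is divisible by $p$, so $v_p(2(p-1)) = 0$ and therefore $v_p\!\left(\frac{2(p-1)}{B_{p-1}}\right) = 1 > 0$. Thus every non-constant coefficient of $E_{p-1}$ is divisible by $p$, giving $E_{p-1}\equiv 1 \pmod p$.

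For the second congruence I would bound the valuations at $p=2$ and $p=3$ separately, using that $k$ being even forces $(p-1)\mid k$ for both primes. At $p=2$: since $1\mid k$, von Staudt--Clausen gives $v_2(B_k) = -1$, so $v_2\!\left(\frac{2k}{B_k}\right) = 1 + v_2(k) + 1 = 2 + v_2(k) \geq 3$, because $k$ even gives $v_2(k)\geq 1$. At $p=3$: since $2\mid k$, we get $v_3(B_k)=-1$ and $v_3\!\left(\frac{2k}{B_k}\right) = v_3(k) + 1 \geq 1$. Combining, $\frac{2k}{B_k}$ is divisible by $2^3\cdot 3 = 24$ at both relevant primes, so every non-constant coefficient of $E_k$ is divisible by $24$ and $E_k \equiv 1 \pmod{24}$.

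The one point that requires care --- and the only genuine subtlety, since the valuation computations themselves are routine --- is that the coefficients of $E_k$ need not be rational integers (for instance $E_{12}$ has a $691$ in the denominator), so the statement $\equiv 1 \pmod{24}$ must be read as a claim about $2$- and $3$-adic valuations. This is exactly what the argument above establishes, and it causes no difficulty: von Staudt--Clausen shows that the denominator of $B_k$ is the squarefree product of those primes $p$ with $(p-1)\mid k$, so any prime (such as $691$) appearing there is automatically coprime to $6$ and does not disturb the valuations at $2$ and $3$.
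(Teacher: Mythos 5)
Your proposal is correct, and the valuation bookkeeping checks out: von Staudt--Clausen gives $v_p(B_k)=-1$ exactly when $(p-1)\mid k$, so $v_p\bigl(2(p-1)/B_{p-1}\bigr)=1$ for $p\geq 5$, while for even $k$ one gets $v_2(2k/B_k)=2+v_2(k)\geq 3$ and $v_3(2k/B_k)=1+v_3(k)\geq 1$, which is precisely divisibility by $24$ at the primes $2$ and $3$. Your closing remark is also the right one to make: the statement must be read $2$- and $3$-adically (or in $\bbZ_{(2)}\cap\bbZ_{(3)}$), since primes like $691$ can occur in the denominator of $2k/B_k$; von Staudt--Clausen guarantees those denominator primes are coprime to $6$, so the congruence is meaningful.

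The comparison with the paper is slightly degenerate: the paper does not prove this proposition at all, but simply cites Lemma 1.22 of \cite{Ono2004web}. What you have written is the standard self-contained argument underlying that cited lemma (read the congruences off $E_k = 1 - \frac{2k}{B_k}\sum_n \sigma_{k-1}(n)q^n$ and control $2k/B_k$ with von Staudt--Clausen), so in substance you are reconstructing the proof the reference would give rather than taking a different route. The only caveat worth flagging is that your argument uses $k$ even in an essential way (both in invoking $(p-1)\mid k$ for $p=2,3$ and in $v_2(k)\geq 1$); the proposition's phrasing ``for any $k\geq 2$'' should be understood as ``for any even $k\geq 2$,'' which is the only case the paper ever uses and the only case in which $E_k$ is a nonzero Eisenstein series in the first place.
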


\begin{proof}
See Lemma 1.22, parts (1) and (2), from \cite{Ono2004web}. \qedhere
\end{proof}

\subsection{Proof of Theorem 1.1} \label{subsec: proof of main thm}

Let $g = \Theta(j) \in M_2^{!}$. We have
$$ g = - q^{-1} + a_j(1) q + 2 a_j(2) q^2 + \cdots.   $$
It follows that we have
$$ g^{p^t} \equiv \pm q^{- p^t} + O(q^{p^t}) \pmod p. $$
In the case where $p = 2, 3$, we can pick $c_1, c_2$ such that $4 c_1 + 6 c_2 = m$. We define
$$ h = \begin{cases} g^{p^t} E_{p-1}^r f & p \geq 5, \\ g^{p^t} E_4^{c_1} E_6^{c_2} f & p = 2 ,3. \end{cases} $$
In both cases, $h$ is weakly holomorphic of weight $2$. It follows by Proposition \ref{prop: vanishing of constant term} that $a_h(0) = 0$. By Proposition  \ref{prop: congruence for eisenstein series}, we have $E_{p-1}^r \equiv 1 \pmod p$ for $p \geq 5$, and $E_4^{c_1} E_6^{c_2} \equiv 1 \pmod p$ for $p = 2,3$. Since additionally $\ord_{\infty}(f) > - p^t$, we have
$$a_h(0) \equiv \pm a_f(p^t) \equiv 0 \pmod p,$$
which proves the desired claim. \qed

\printbibliography

\Addresses

\end{document}